\title{Two results on the digraph chromatic number}
\author{Ararat Harutyunyan\thanks{Research supported by FQRNT
  (Le Fonds qu\'{e}b\'{e}cois de la recherche sur la nature et les technologies)
  doctoral scholarship.}\\
  {Department of Mathematics}\\
  {Simon Fraser University}\\
  {Burnaby, B.C. V5A 1S6} \\
  email: {\tt aha43@sfu.ca}
\and
  Bojan Mohar\thanks{Supported in part by an NSERC Discovery Grant (Canada),
  by the Canada Research Chair program, and by the
  Research Grant P1--0297 of ARRS (Slovenia).}~\thanks{On leave from:
  IMFM \& FMF, Department of Mathematics, University of Ljubljana, Ljubljana,
  Slovenia.}\\
  {Department of Mathematics}\\
  {Simon Fraser University}\\
  {Burnaby, B.C. V5A 1S6} \\
  email: {\tt mohar@sfu.ca}
}
\newtheorem{theorem}{Theorem}[section]
\newtheorem{conjecture}[theorem]{Conjecture}
\newtheorem{claim}{Claim}
\newcommand{\DEF}[1]{{\em #1\/}}
\newcommand{\PP}{\mathbb P}
\newcommand{\EE}{\mathbb E}
\newcommand{\tDelta}{\tilde{\Delta}}
\begin{document}

\maketitle

\begin{abstract}

It is known (Bollob\'{a}s \cite{B1978}; Kostochka and Mazurova
\cite{KM1977}) that there exist graphs of maximum degree $\Delta$
and of arbitrarily large girth whose chromatic number is at least
$c \Delta / \log \Delta$. We show an analogous result for digraphs
where the chromatic number of a digraph $D$ is defined as the
minimum integer $k$ so that $V(D)$ can be partitioned into $k$
acyclic sets, and the girth is the length of the shortest cycle in
the corresponding undirected graph. It is also shown, in the same
vein as an old result of Erd\H{o}s \cite{E1962}, that there are
digraphs with arbitrarily large chromatic number where every large
subset of vertices is 2-colorable.

\end{abstract}

{\bf Keywords:} Chromatic number, digraph, digraph coloring, dichromatic number, girth.

\section{Digraph Colorings}

Let $D$ be a (loopless) digraph. A vertex set $A \subset V(D)$ is
called \DEF{acyclic} if the induced subdigraph $D[A]$ has no
directed cycles. A \DEF{$k$-coloring} of $D$ is a partition of
$V(D)$ into $k$ or fewer acyclic sets. The minimum integer $k$ for which
there exists a $k$-coloring of $D$ is the \DEF{chromatic number}
$\chi(D)$ of the digraph D. This definition of the chromatic
number of a digraph was first treated by Neumann-Lara
\cite{N1982}. The same notion was independently introduced two
decades later when considering the circular chromatic number of
weighted (directed or undirected) graphs \cite{M2003}, and further treated
in \cite{BFJKM2004}.

This notion of colorings of digraphs turns out to be the
natural way of extending the theory of undirected  graph colorings
since it provides extensions of most of the basic results from
graph coloring theory \cite{BFJKM2004, HM2011, HM2010, M2003, M2010}.

In this note we prove, using standard probabilistic approach, that
two further analogues of graph coloring results carry over to
digraphs. The first result, see Theorem \ref{thm:1}, provides evidence that the digraph
chromatic number, like the graph chromatic number, is a global
parameter that cannot be deduced from local considerations. The second
result, see Theorem \ref{thm:2}, shows that there are digraphs with large
chromatic number $k$ in which every set of at most $c|V(D)|$ vertices is
2-colorable, where $c>0$ is a constant that only depends on $k$. The 
analogous result for digraphs was proved by Erd\H{o}s \cite{E1962}
with its outcome being that all sets of at most $cn$ are 3-colorable. 
Both the 3-colorability in Erd\H{o}s' result and 2-colorability in Theorem
\ref{thm:2} are best possible.

Concerning the first result, it is well-known that there exist graphs 
with large girth and
large chromatic number. Bollob\'{a}s \cite{B1978} and, independently,
Kostochka and Mazurova
\cite{KM1977} proved that there exist graphs of maximum degree
at most $\Delta$
and of arbitrarily large girth whose chromatic number is
$ \Omega( \Delta / \log \Delta)$. Our Theorem \ref{thm:1} provides
an extension to digraphs.

The bound of $ \Omega( \Delta / \log \Delta)$ from \cite{B1978, KM1977} is
essentially best
possible: a result of Johansson \cite{J1996} shows that if $G$ is
triangle-free, then the chromatic number is $O (\Delta / \log
\Delta)$. Similarly, Theorem \ref{thm:2} is also essentially best possible:
Erd\H{o}s et al. \cite{EGK1991} showed that
every tournament on $n$ vertices has
chromatic number $O(\tfrac{n}{\log n})$.

In general, it may be true that the following analog of Johansson's result
holds for digon-free digraphs, as conjectured by McDiarmid and
Mohar \cite{MM2002}.

\begin{conjecture} \label{conj:1}
Every digraph $D$ without digons and with
maximum total degree $\Delta$ has $\chi(D) =
O(\frac{\Delta}{\log \Delta})$.
\end{conjecture}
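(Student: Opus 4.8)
Since this statement is posed as a conjecture, what follows is the approach I would attempt rather than a finished argument; it mirrors the proof of Johansson's theorem \cite{J1996} for the undirected triangle‑free case, carried out by the semi‑random (``nibble'') method. Fix $k=\lceil C\Delta/\log\Delta\rceil$ for a large absolute constant $C$, and start every vertex $v$ with the full palette $L(v)=\{1,\dots,k\}$. Proceed in rounds. In each round, activate every still‑uncolored vertex independently with a small probability $p$; assign each activated vertex a colour chosen uniformly at random from its current list; then retain that colour at $v$ unless $v$ thereby lies on a monochromatic directed cycle of length at most $g$, where $g$ is a suitable (possibly constant) threshold; finally, for each vertex $u$, delete from $L(u)$ those colours that now sit permanently on both an in‑neighbour and an out‑neighbour of $u$ in a configuration that could be completed to a short monochromatic cycle. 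The aim is that after $O(\log\log\Delta)$ rounds every uncolored vertex $v$ still has $|L(v)|=\Omega(\log\Delta)$ while its number of uncolored neighbours has dropped below $|L(v)|$, after which one finishes by properly colouring the underlying graph of the residual digraph from the leftover lists (a proper colouring of the underlying graph is certainly an acyclic colouring of the digraph).

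The role of the digon‑free hypothesis is precisely that of triangle‑freeness in \cite{J1996}: for each vertex $v$ the in‑neighbourhood and out‑neighbourhood are disjoint, and more generally the ``two‑step'' structures through $v$ that can render a colour unsafe are not too clustered, so that the number of colours lost from $L(v)$ in a round, and the number of $v$'s neighbours that get coloured, both concentrate tightly around their (small) expectations. The standard machinery would then be to bound these expectations, use Talagrand's inequality for the concentration and the Lovász Local Lemma for the round‑to‑round step — here the threshold $g$ is what keeps the relevant bad events of bounded size and bounded dependency, since they should depend only on a bounded‑radius neighbourhood of $v$ — and iterate.

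The main obstacle, and the reason this remains a conjecture, is the passage from short to long monochromatic directed cycles. Retaining a colour only when it avoids monochromatic cycles of length at most $g$ controls short cycles, but a colour class can still acquire a long monochromatic directed cycle, and the event ``colour $c$ is safe for $v$'' then depends on an unboundedly large region of the partial colouring. Such events are neither covered by the Local Lemma (the dependency is unbounded) nor amenable to Talagrand‑type concentration, so to push the argument through one would need either (i) an additional structural argument showing that the final union of colour classes has large digirth, ruling long monochromatic cycles out altogether, or (ii) a genuinely new concentration tool for long‑range monochromatic‑path events. The more recent entropy‑compression reproofs of Johansson's theorem run into the same wall. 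Absent such an idea, one expects only a bound weaker than the conjectured $O(\Delta/\log\Delta)$ by roughly a $\log\log\Delta$ factor to be attainable by these methods.
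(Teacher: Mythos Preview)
The paper does not prove this statement: it is explicitly presented as a conjecture (attributed to McDiarmid and Mohar) and left open. Your proposal correctly recognizes this and, rather than claiming a proof, sketches a Johansson-style semi-random approach and honestly identifies where it breaks down. That is the appropriate response here; there is nothing in the paper to compare your argument against.

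Your diagnosis of the difficulty is sound. The digon-free hypothesis does play the role of triangle-freeness in that it separates the in- and out-neighbourhoods of each vertex, but the real obstruction you isolate --- that the bad event ``colour $c$ creates a monochromatic directed cycle through $v$'' is not local, since directed cycles can be arbitrarily long --- is exactly why the nibble/Local Lemma machinery does not transfer directly. Your two suggested escape routes (force large digirth structurally, or develop concentration for long-range path events) are reasonable framings of what a genuine proof would need. One minor caveat: your claim that ``only a bound weaker by roughly a $\log\log\Delta$ factor'' is attainable by these methods is speculative and not obviously justified; it would be better to omit a specific quantitative prediction unless you can support it.
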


Theorem \ref{thm:1} shows that Conjecture \ref{conj:1}, if true,
is essentially best possible.

\section{Chromatic number and girth}
First, we need some basic definitions. For an extensive treatment of
digraphs, we refer the reader to \cite{BG2001}.
Given a loopless digraph $D$, a \DEF{cycle} in $D$
is a cycle in the underlying undirected graph. The \DEF{girth} of
$D$ is the length of a shortest cycle in $D$, and the \DEF{digirth}
of $D$ is the length of a shortest directed cycle in $D$. The \DEF{total
degree} of a vertex $v$ is the number of arcs incident to $v$. The
\DEF{maximum total degree} of $D$, denoted by $\Delta(D)$, is
the maximum of all total degrees of vertices in $D$. The \DEF{out-degree} 
and the \DEF{in-degree} of a vertex $v$ are denoted by $d^{+}(v)$ and $d^{-}(v)$,
respectively.

It is proved in \cite{BFJKM2004} that there are digraphs of
arbitrarily large digirth and dichromatic number. Our result is an
analogue of the aforementioned result of Bollob\'{a}s \cite{B1978} and Kostochka and Mazurova
\cite{KM1977}. Note that the
result involves the girth and not the digirth.

\begin{theorem} \label{thm:1}
Let $g$ and $\Delta$ be positive integers. There exists a digraph $D$ of girth at least $g$,
with $\Delta(D) \leq \Delta$, and $\chi(D) \geq a \Delta / \log \Delta$
for some absolute constant $a > 0$. For $\Delta$ sufficiently large we may take $a= \frac{1}{15}$.
\end{theorem}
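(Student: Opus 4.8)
The plan is to imitate the classical probabilistic proof of the undirected result of Bollob\'as \cite{B1978} and Kostochka and Mazurova \cite{KM1977}, and then transfer it to digraphs by the cheapest possible device: replacing each edge by a digon. Indeed, if $G$ is a simple graph and $D$ is obtained from $G$ by replacing every edge $uv$ with the two opposite arcs $uv$ and $vu$, then every edge of $G$ becomes a directed $2$-cycle of $D$, so a set $A\subseteq V(D)$ is acyclic in $D$ precisely when $A$ is independent in $G$; hence $\chi(D)=\chi(G)$. Moreover the underlying undirected graph of $D$ is $G$ itself, so the girth of $D$ equals the girth of $G$, and each vertex of $D$ has total degree twice its degree in $G$, so $\Delta(D)=2\Delta(G)$. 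It therefore suffices to produce, for each fixed $g$ and all large $\Delta$, a graph $G$ of girth at least $g$ with $\Delta(G)\le\Delta/2$ and $\chi(G)=\Omega(\Delta/\log\Delta)$ with an absolute implied constant --- that is, to re-prove the Bollob\'as--Kostochka--Mazurova bound while keeping track of the constant --- and then to bidirect $G$. (For $\Delta$ below an absolute threshold the target $a\Delta/\log\Delta$ is $O(1)$, so any sufficiently small absolute $a>0$ works there, and I concentrate on the main case of large $\Delta$.)

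To build such a $G$ I would use a random graph together with the deletion method. Put $d=\lfloor\Delta/2\rfloor$, let $n$ be a suitable polynomial in $\Delta$ (of degree roughly $g$), and take $G_0=G(n,p)$ with $p$ a fixed factor below $d/n$, so that the expected degree is safely under $d$. Three routine estimates are then needed. First, by the first moment method the expected number of cycles of length less than $g$ in $G_0$ is at most $\sum_{i=3}^{g-1}(np)^i/(2i)$, which is at most $n/8$ once $n$ is chosen large enough compared with $d^{\,g-1}$; this is the only place $g$ enters, and it is what forces $n$ to be a polynomial in $\Delta$ of degree growing with $g$. Second, a Chernoff bound shows that a fixed vertex has degree exceeding $d$ with probability at most $e^{-cd}$ for an absolute $c>0$ (as $np$ is a definite factor below $d$), so the expected number of such vertices is at most $ne^{-cd}\le n/8$ once $\Delta$ exceeds an absolute constant. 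Third --- the crucial one --- $\Pr[\alpha(G_0)\ge k]\le\binom{n}{k}(1-p)^{\binom{k}{2}}\le(en/k)^k e^{-pk(k-1)/2}$, where the factor $(e/k)^k$ is exactly what converts the $\ln n$ hidden in $\binom{n}{k}$ into $\ln(np)$ when $k$ has order $\ln(np)/p$; hence this probability is $o(1)$ as soon as $k\ge(1+o(1))\,2\ln(np)/p$, so with probability tending to $1$ we have $\alpha(G_0)\le(1+o(1))\,\tfrac{2n\ln(np)}{np}$, with $\ln(np)=\Theta(\log\Delta)$ independent of $g$.

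Combining the three estimates by Markov's inequality, with positive probability $G_0$ has at most $n/2$ vertices lying on a short cycle or having degree exceeding $d$, while simultaneously $\alpha(G_0)\le(1+o(1))\,\tfrac{2n\ln(np)}{np}$. Deleting all these bad vertices leaves a graph $G$ with $|V(G)|\ge n/2$, girth at least $g$, $\Delta(G)\le d\le\Delta/2$, and $\alpha(G)\le\alpha(G_0)$, so that
\[
 \chi(G)\ \ge\ \frac{|V(G)|}{\alpha(G)}\ \ge\ (1-o(1))\,\frac{np}{4\ln(np)}\ =\ \Omega\!\Big(\frac{\Delta}{\log\Delta}\Big),
\]
the implied constant being absolute because $np=(1-o(1))\,\Delta/2$ and $\ln(np)=\Theta(\log\Delta)$. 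A careful optimization of the parameters and of the deletion losses then yields the value $a=\tfrac1{15}$ claimed in the theorem once $\Delta$ is large, and bidirecting $G$ produces the required digraph $D$.

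The main obstacle is not any single estimate but keeping the constant $a$ \emph{absolute}, i.e.\ independent of $g$. The tension is that eliminating all cycles of length $<g$ forces $n$ to be a polynomial in $\Delta$ whose degree grows with $g$, so $\log n=\Theta(g\log\Delta)$; the argument survives only because the independence-number estimate depends on $\log(np)=\Theta(\log\Delta)$ rather than on $\log n$. This is precisely why one should bidirect $G_0$ rather than randomly orient it: a direct union bound over acyclic vertex sets of a random digraph would also have to range over the up to $k!$ topological orders of each candidate $k$-set, reinstating a factor $n^k$ in place of $(en/k)^k$, hence a $\log n$ in the denominator and a constant decaying like $1/g$. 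Reducing the colouring problem to the ordinary independence number of a random graph is what avoids this loss.
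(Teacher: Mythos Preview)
Your reduction by bidirecting does not deliver what the theorem is after. The paper defines the girth of $D$ as the length of a shortest cycle in the underlying undirected graph; when every edge of $G$ is replaced by a digon, the underlying graph is the multigraph with each edge of $G$ doubled, and each such pair of parallel edges is a $2$-cycle. Thus your $D$ has girth $2$ as soon as $G$ has an edge, and the construction fails for every $g\ge 3$. Even under a ``simple underlying graph'' reading, a digraph consisting entirely of digons is beside the point: the theorem sits immediately after Conjecture~\ref{conj:1} on digon-free digraphs and is advertised as showing that conjecture, if true, is best possible. A bidirected graph cannot witness that; this is why the paper's proof explicitly notes ``Observe that $D$ has no digons.''

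The paper does precisely what your last paragraph warns against: it takes $G(n,2p)$ with $p=\Delta/(4en)$ and orients each edge uniformly at random, producing a digon-free $D$. Your concern that a union bound over acyclic $k$-sets must pay a factor $k!$ for the topological orders --- turning $\log(np)$ into $\log n$ and making the constant decay like $1/g$ --- is valid for the crude first-moment estimate, but it is not a real obstruction. The key external input is a theorem of Spencer and Subramanian giving, a.a.s., $\alpha(D)\le \tfrac{2}{\log q}\bigl(\log(np)+O(1)\bigr)$ for $D\in D(n,p)$, the same $\log(np)$ dependence as for independent sets in $G(n,p)$. With $p=\Theta(\Delta/n)$ this yields $\alpha(D)=O(n\log\Delta/\Delta)$ with a bound independent of $g$; after deleting at most $n/1000$ vertices to control the maximum degree and at most $\Delta^{g}$ vertices lying on short cycles, one gets $\chi\ge \Delta/(5e\log\Delta)$, whence $a=1/15$. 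So the missing ingredient is not a change of random model but the sharper bound on the maximum acyclic set in a random oriented graph.
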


\begin{proof}
Our proof is in the spirit of Bollob\'{a}s \cite{B1978}. We may assume that 
$\Delta$ is sufficiently large. 

Let $D=D(n,p)$ be a random digraph of order $n$ defined as follows. 
For every $u,v \in V(D)$, we connect $uv$ with probability
$2p$, independently. Now we randomly (with probability 1/2) assign
an orientation to every edge that is present. Observe that $D$ has no
digons. We will use the value $p = \frac{\Delta}{4en}$, where
$e$ is the base of the natural logarithm.

\begin{claim}
$D$ has no more than
$\Delta^g$ cycles of length less than $g$ with probability at least
$1 - \tfrac{1}{\Delta}$.
\end{claim}

\begin{proof}
Let $N_l$ be the number of cycles of length $l$ in $D$. Then $$\EE[N_l] \leq
\binom{n}{l}l! (2p)^l \leq n^l (2p)^l \leq (\tfrac{\Delta}{4})^l.$$ Therefore, the
expected number of cycles of length less than $g$ is at most
$\Delta^{g-1}$. So the probability that $D$ has more than
$\Delta^{g}$ cycles of length less than $g$ is at most $1/\Delta$
by Markov's inequality.
\end{proof}

\begin{claim} There is a set $A$ of at most $n/1000$ vertices
of $D$ such that $\Delta(D - A) \leq \Delta$ with probability
at least $\tfrac{1}{2}$.

\end{claim}

\begin{proof}
Let $X_d$ be the number of
vertices of total degree $d$, $d=0,1,..., n-1$. Following \cite{B1978}, 
define the \emph{excess degree of} $D$ to be $ex(D) = \sum_{d= \Delta +
1}^{n-1}(d-\Delta)X_d$. 
Clearly, there is a set of at most $ex(D)$ arcs (or vertices) whose removal reduces
the maximum total degree of $D$ to at most $\Delta$. 

Now, we estimate the expectation of $X_d$. By linearity of expectation, we have:

\begin{eqnarray*}
\EE[X_d] &\leq  & n \binom{n-1}{d}(2p)^{d} \\
& \leq & n \left ( \frac{e(n-1)}{d} \right)^d \left ( \frac{\Delta}{2 e n} \right)^d \\
& \leq & n \left( \frac{\Delta}{2d} \right)^d. \\
\end{eqnarray*}

Therefore, by linearity of expectation we have that

\begin{eqnarray*}
\EE[ex(D)] &\leq  & \sum_{d = \Delta + 1}^{n-1} nd \left( \frac{\Delta}{2d} \right)^d \\
& \leq & \frac{n \Delta}{2} \sum_{d = \Delta + 1}^{n-1} \left( \frac{\Delta}{2d} \right)^{d-1}\\
& \leq & \frac{n \Delta}{2} \sum_{d = \Delta + 1}^{n-1} \left( \frac{1}{2} \right)^{d-1}\\
& \leq & \frac{n \Delta}{2} \cdot \frac{(\frac{1}{2})^{\Delta}}{1-\frac{1}{2}}\\
& = & n \cdot \frac{\Delta}{2^{\Delta}}\\
& \leq & \frac{n}{2000}.\\
\end{eqnarray*}

Now, by Markov's inequality, $\PP[ex(D) > n / 1000] < 1/2.$
\end{proof}

Let $\alpha(D)$ be the size of a maximum acyclic set of
vertices in $D$. The following result will be used in the proof
of our next claim and also in Section~3.

\begin{theorem}[\cite{SS2008}] \label{thm:mas}
Let $D \in D(n,p)$. There is an absolute constant $W$ such that if $p$ satisfies $np \geq W$, then,
a.a.s. $$ \alpha(D) \leq \left ( \frac{2}{\log q} \right ) (\log np + 3e), $$
where $q = (1-p)^{-1}$.
\end{theorem}

\begin{claim} Let $D \in D(n,p)$. Then $\alpha(D) \leq \frac{4en \log
\Delta}{\Delta}$ with high probability.
\end{claim}

\begin{proof}
Since $\Delta$ is sufficiently large, Theorem \ref{thm:mas} applies
and the result follows.
\end{proof}

Now, pick a digraph $D$ that satisfies claims 1, 2 and 3. After
removing at most $ n/1000 + \Delta^g \leq n / 100$ vertices, the resulting digraph
$D^{*}$ has maximum degree at most $\Delta$ and girth at least
$g$. Clearly, $\alpha(D^{*}) \leq \alpha(D)$. Therefore,
$\chi(D^{*}) \geq \frac{n(1- 1/100)}{ 4 en \log \Delta / \Delta}
\geq \frac{\Delta}{5e \log \Delta}$.
\end{proof}

\section{Another result of the same nature}

A result of Erd\H{o}s \cite{E1962} states that there exist graphs
of large chromatic number where every induced subgraph with up to a constant
fraction number of the vertices is 3-colorable. In particular, it
is proved that for every $k$ there exists $\epsilon > 0$ such that
for all $n$ sufficiently large there exists a graph $G$ of order
$n$ with $\chi(G)
> k$ and yet $\chi(G[S]) \leq 3$ for every $S \subset V(G)$ with $|S|
\leq \epsilon n$.

The 3-colorability in the aforementioned theorem cannot be improved.
A result of Kierstead, Szemeredi and Trotter \cite{KST1984} (with later
improvements by Nilli \cite{N1999} and Jiang \cite{J2001}) shows that every $4$-chromatic graph of
order $n$ contains an odd cycle of length at most $8 \sqrt{n}$.

We prove the following analog for digraphs. Our proof follows
the ideas of Erd\H{o}s found in \cite{AS1992}.

\begin{theorem} \label{thm:2}

For every $k$, there exists $\epsilon > 0$ such that for every
sufficiently large integer $n$ there exists a digraph $D$ of order $n$ with
$\chi(D)
> k$ and yet $\chi(D[S]) \leq 2$ for every $S \subset V(D)$ with $|S|
\leq \epsilon n$.
\end{theorem}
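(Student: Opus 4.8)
The plan is to take $D = D(n,p)$, the random digraph from the proof of Theorem~\ref{thm:1}, with $p = c/n$, where $c = c(k)$ is a large constant that I fix first and $\epsilon = \epsilon(c) = \epsilon(k)$ a small constant that I fix afterwards, so there is no circularity in the two choices. I will show that for every sufficiently large $n$, with positive probability $D$ satisfies both $\chi(D) > k$ and $\chi(D[S]) \le 2$ for every $S \subseteq V(D)$ with $|S| \le \epsilon n$; this proves the theorem. Recall that $D$ has no digons and that its underlying undirected graph is exactly $G(n, 2p)$.

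For the global bound, note that $np = c$, so for $c$ at least the absolute constant $W$ of Theorem~\ref{thm:mas}, that theorem gives, a.a.s., $\alpha(D) \le \frac{2}{-\log(1-p)}(\log c + 3e) \le \frac{2n}{c}(\log c + 3e)$, using $-\log(1-p) \ge p = c/n$. For $c$ large enough in terms of $k$ the right-hand side is smaller than $n/k$; since the $\chi(D)$ acyclic color classes in a coloring of $D$ each have at most $\alpha(D)$ vertices and cover all $n$ of them, $\chi(D) \ge n/\alpha(D) > k$ then holds a.a.s. Hence $\Pr[\chi(D) \le k] \to 0$ as $n \to \infty$.

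The crux is the following deterministic lemma, which I would prove by induction on $|V(H)|$: \emph{if $H$ is a digon-free digraph whose underlying undirected graph is $2$-degenerate, that is, has no subgraph of minimum degree at least $3$, then $\chi(H) \le 2$.} Pick a vertex $v$ of degree at most $2$ in the underlying graph; since $H$ has no digons, $v$ is incident to at most two arcs. The underlying graph of $H - v$ is again $2$-degenerate, so by the induction hypothesis $H - v$ admits a $2$-coloring with acyclic classes $C_1, C_2$, and I put $v$ back. If $v$ has in-degree $0$ or out-degree $0$ in $H$, then no directed cycle of $H$ passes through $v$ and $v$ may be added to either class. Otherwise $v$ has in-degree and out-degree exactly $1$, so it has a unique in-neighbor $a$; adding $v$ to a class $C_i$ with $a \notin C_i$ (one exists, as $a$ lies in exactly one of $C_1, C_2$) leaves $v$ with no in-neighbor inside $C_i$, so no directed cycle through $v$ lies in $C_i \cup \{v\}$, and both classes remain acyclic. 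Applying this with $H = D[S]$ for $|S| \le \epsilon n$: the underlying graph of $D[S]$ is an induced subgraph of $G(n, 2p)$ on at most $\epsilon n$ vertices, so if $G(n, 2p)$ contains no subgraph on at most $\epsilon n$ vertices with minimum degree at least $3$, then the underlying graph of $D[S]$ is $2$-degenerate and hence $\chi(D[S]) \le 2$.

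Finally, I bound the probability that $G(n, 2p)$ does contain such a dense subgraph by a first-moment argument: a subgraph on $t$ vertices with minimum degree at least $3$ has at least $\lceil 3t/2 \rceil$ edges, so this probability is at most $\sum_{t=4}^{\lfloor \epsilon n \rfloor} \binom{n}{t} \binom{\binom{t}{2}}{\lceil 3t/2 \rceil} (2p)^{\lceil 3t/2 \rceil}$. Using $\binom{n}{t} \le (en/t)^t$, the estimate $\binom{\binom{t}{2}}{s} \le (et/3)^s$ for $s = \lceil 3t/2 \rceil$, and $2p = 2c/n$, a short calculation bounds the $t$-th summand by $\big[\, e\,(2ce/3)^{3/2}\sqrt{t/n}\,\big]^t \le \big[\, e\,(2ce/3)^{3/2}\sqrt{\epsilon}\,\big]^t$ for $t \le \epsilon n$; choosing $\epsilon$ small enough in terms of $c$ makes the bracket at most $\tfrac{1}{2}$, so the whole sum is at most $\sum_{t \ge 4} 2^{-t} < \tfrac{1}{4}$. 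Combined with the previous paragraph, for all large $n$ the probabilities of the two bad events sum to less than $1$, so the required digraph exists. I expect the deterministic lemma to be the main point: it is what forces the exponent to be $2$ rather than $3$, it genuinely fails if digons are allowed, and $2$ cannot be lowered to $1$ since $D(n, c/n)$ a.a.s.\ contains directed cycles of bounded length, so Theorem~\ref{thm:2} is in this respect best possible.
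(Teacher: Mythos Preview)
Your proof is correct and follows the same overall plan as the paper: take $D = D(n,c/n)$ for a large constant $c=c(k)$, use Theorem~\ref{thm:mas} for the global lower bound on $\chi(D)$, and a first-moment union bound for the local $2$-colorability. The one genuine difference is the local step. The paper argues via $3$-criticality: if $\chi(D[S]) \ge 3$, pick a $3$-critical $T \subseteq S$; then every $v \in T$ satisfies $\min\{d^+_{D[T]}(v), d^-_{D[T]}(v)\} \ge 2$ (otherwise a $2$-coloring of $D[T]-v$ extends to $D[T]$), so $D[T]$ has at least $2|T|$ arcs, and the union bound carries the exponent $2t$. Your route through the $2$-degeneracy lemma is the contrapositive of a weaker statement---you only exploit that a removable vertex has \emph{total} degree at most $2$, not that it has in- or out-degree at most $1$---and accordingly yields only $\lceil 3t/2\rceil$ edges in the dense witness. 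Both exponents are enough; the paper's $2t$ can give a somewhat better dependence of $\epsilon$ on $k$, while your version has the merit of isolating a clean reusable lemma (digon-free with $2$-degenerate underlying graph $\Rightarrow$ $2$-colorable) and of working entirely in the underlying $G(n,2p)$. Your closing remark that the specific model $D(n,c/n)$ contains short directed cycles is true but weaker than what the paper proves next, namely that \emph{every} digraph with $\chi\ge 3$ has a directed cycle of length $o(n)$.
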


\begin{proof}
Clearly, we may assume that $\log k \geq 3$ and $k \geq \sqrt{W}$,
where $W$ is the constant in Theorem \ref{thm:mas}. Let us consider
the random digraph $D=D(n,p)$ with $p = \frac{k^2}{n}$ and let
$ 0< \epsilon < k^{-5}$.

We first show that $\chi(D) > k$ with high probability. Since $k$ is sufficiently large,
Theorem \ref{thm:mas} implies that $\alpha(D) \leq 6n \log k / k^2$ with high probability.
Therefore, almost surely $\chi(D) \geq \tfrac{1}{6} k^2 / \log k > k.$

Now, we show that with high probability every set of at most $\epsilon n$ vertices can
be colored with at most two colors. Suppose there
exists a set $S$ with $|S| \leq \epsilon n$ such that $\chi(D[S]) \geq 3$.
Let $T \subset S$ be a 3-critical subset, i.e. for every
$v \in T$, $\chi(D[T]-v) \leq 2$. Let $t = |T|$. Since $D[T]$ is $3$-critical, every 
$v \in T$ satisfies $\min\{d^{+}_{D[T]}(v), d^{-}_{D[T]}(v)\} \geq 2$ for otherwise
a 2-coloring of $D[T]-v$ could be extended to $D[T]$. This implies
that $D[T]$ has at least $2t$ arcs. The probability of this event is at most

\begin{eqnarray}\sum_{3 \leq t \leq \epsilon n} \binom{n}{t} \binom{2 \binom{t}{2} }{2t} \left
(\frac{k^2}{n} \right)^{2t} & \leq & \sum_{3 \leq t \leq \epsilon n}
\left( \frac{en}{t} \right)^t \left( \frac{et(t-1)}{2t} \right)^{2t}
\left(\frac{k^2}{n} \right)^{2t} \notag \\
&\leq & \sum_{3 \leq t \leq \epsilon n} \left( \frac{e^3 tk^4}{4n} \right)^t \notag \\
&\leq & \epsilon n \max_{3 \leq t \leq \epsilon n} \left(
\frac{7tk^4}{n} \right)^t
\end{eqnarray}

If $3 \leq t \leq (\log n)^2$, then $(7tk^4 /n)^t \leq (7 (\log n)^2 k^4 / n)^t \leq
\left (7 (\log n)^2 k^4 / n \right)^3 = o(1/n).$
Similarly, if $(\log n)^2 \leq t
\leq \epsilon n$, then $\left(
7tk^4/n \right)^t \leq (7 \epsilon k^4)^t \leq (7/k)^t \leq
(7/k)^{(\log n)^2} = o(1/n).$

These estimates and (1) imply that the probability that $\chi(D[S]) \leq 2$ is
$o(1)$. This completes the proof.
\end{proof}

The $2$-colorability in the previous theorem cannot be decreased to $1$
due to the following theorem.

\begin{theorem}
If $D$ is a digraph with $\chi(D) \geq 3$ and of order $n$, then
it contains a directed cycle of length $o(n)$. 
\end{theorem}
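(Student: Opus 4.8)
My plan is to prove the equivalent contrapositive statement in the asymptotic form: for every $\epsilon>0$ there is an $n_0$ so that every digraph $D$ of order $n\ge n_0$ whose shortest directed cycle has length greater than $\epsilon n$ satisfies $\chi(D)\le 2$; letting $\epsilon\to 0$ this is exactly the assertion that $\chi(D)\ge 3$ forces a directed cycle of length $o(n)$. The first step is to reduce to strongly connected digraphs: $\chi(D)=\max\{\chi(D[C]) : C \text{ a strong component of } D\}$, since directed cycles, and hence optimal colourings, are confined to strong components; moreover a strong component inherits a digirth at least that of $D$ and has at most $n$ vertices, so a short directed cycle inside a component is a short directed cycle in $D$.

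So let $D$ be strongly connected with digirth $g>\epsilon n$ and let $z_0z_1\cdots z_{g-1}z_0$ be a shortest directed cycle. I would run a BFS from $r=z_0$ along out-arcs, producing distance layers $L_0=\{r\},L_1,\dots,L_h$, and use two elementary facts: every arc runs from some $L_a$ to some $L_b$ with $b\le a+1$ (because $\mathrm{dist}(r,v)\le\mathrm{dist}(r,u)+1$ for an arc $u\to v$); and $z_i\in L_i$ for $0\le i\le g-1$ (a path from $r$ to $z_i$ shorter than $i$, continued along $z_i\to z_{i+1}\to\cdots\to z_0$, is a closed walk of length less than $g$ and hence contains a directed cycle shorter than the digirth), so the BFS spreads $D$ over at least $g>\epsilon n$ layers and the layers are on average tiny. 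Now colour each vertex by the parity of its layer index. Within a colour class every arc is non-increasing in the layer index, so every monochromatic directed cycle lies inside a single layer $L_a$. Call $L_a$ \emph{bad} if $D[L_a]$ contains a directed cycle; such a cycle has length at least $g$, so $|L_a|\ge g>\epsilon n$, whence there are fewer than $1/\epsilon$ bad layers and $L_0$ is not among them. If there is no bad layer the parity colouring is a proper $2$-colouring and we are done. Otherwise each bad layer $L_a$ has $|L_a|<n$ and $\mathrm{digirth}(D[L_a])\ge g>\epsilon|L_a|$, so the same argument applied to $D[L_a]$ (whose order is strictly smaller and whose ratio of digirth to order is at least as large) yields $\chi(D[L_a])\le 2$, while the subdigraph induced by the union $U$ of the non-bad layers is properly $2$-coloured by the parity colouring; what remains is to amalgamate these $2$-colourings into a single $2$-colouring of $D$.

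The amalgamation is the crux, and equivalently it is the quantitative statement that a strongly connected digraph with $\chi\ge 3$ and digirth $g$ must have order super-linear in $g$ that carries the real content. The difficulty is that replacing a bad layer's single parity colour by a genuine $2$-colouring moves some of its vertices into the other colour class, which can create a monochromatic directed cycle winding through many layers. The structure one can try to exploit is that the pieces --- the maximal runs of consecutive non-bad layers together with the individual bad layers --- are linearly ordered, forward arcs go only between consecutive pieces, and every backward arc $u\to w$ (head in an earlier layer than tail) forces a directed $w\to u$ path of length at least $g-1$; combined with the bound of $1/\epsilon$ on the number of bad layers and the fact that $D$ is spread over at least $g>\epsilon n$ layers, so the bad layers are a bounded number of large, well-separated chunks, one should be able to process the pieces in order and choose, for each bad layer, both its $2$-colouring and the matching of its two colour classes to the two global colours so that no monochromatic directed cycle crosses a piece boundary. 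Ruling out the monochromatic cycles created by backward arcs between two distinct bad layers is the step I expect to require the most care, and it is exactly there that the hypothesis that the digirth is linear in $n$ has to be used.
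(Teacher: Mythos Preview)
Your argument stops precisely at the step you yourself call ``the crux'': merging the parity colouring on the non-bad layers with the recursively obtained $2$-colourings of the (fewer than $1/\epsilon$) bad layers into a single proper $2$-colouring of $D$. You sketch a plan---process the pieces in their linear order and, for each bad layer, choose both its $2$-colouring and which of its two classes is matched to which global colour---but you do not carry it out, and there is no argument that the finitely many swap choices can simultaneously break every monochromatic directed cycle that threads through several bad layers via backward arcs. A secondary issue is the induction itself: the statement you are proving has the shape ``for every $\epsilon>0$ there exists $n_0(\epsilon)$ such that \dots'', yet the recursive call is to $D[L_a]$, whose order may well lie below $n_0(\epsilon)$ (you only know $|L_a|>\epsilon n$), so the inductive hypothesis is not available as stated; one would need to recast the claim with digirth fixed and order as the induction parameter, and check a genuine base case. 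As written, the proposal is an outline with its decisive step missing.

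The paper's proof is entirely different and essentially two lines once one invokes the directed Erd\H{o}s--P\'osa theorem of Reed, Robertson, Seymour and Thomas: for every $t$ there is an $f(t)$ such that every digraph either contains $t$ pairwise vertex-disjoint directed cycles or has a feedback vertex set of size at most $f(t)$. Taking $t=h(n):=\max\{t:tf(t)\le n\}\to\infty$, the first alternative gives a directed cycle of length at most $n/h(n)=o(n)$ by pigeonhole; in the second, the feedback set $S$ has $|S|\le f(t)\le n/h(n)$, and since $D\setminus S$ is acyclic while $\chi(D)\ge 3$, we get $\chi(D[S])\ge 2$, so $S$ already contains a directed cycle of length at most $|S|=o(n)$. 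Thus the paper trades the delicate layer-merging you attempt for a single packing--covering black box; your route is more elementary in spirit but, as it stands, incomplete.
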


\begin{proof}
In the proof we shall use the following digraph analogue of
Erd\H{o}s-Posa Theorem. Reed et al. \cite{RRST1996} proved that
for every integer $t$, there exists an integer $f(t)$ so that
every digraph either has $t$ vertex-disjoint directed cycles or a
set of at most $f(t)$ vertices whose removal makes the digraph
acyclic. Define $h(n) = \max\{t: tf(t) \leq n\}$. It is clear that $h(n) =
\omega(1)$. 

Let $c$ be the length of a shortest directed cycle in
$D$ and let $t:=h(n)$. If $D$ has $t$ vertex-disjoint directed cycles, then $ct
\leq n$ which implies that $c \leq \tfrac{n}{h(n)} = o(n)$.
Otherwise, there exists a set $S$ of
vertices with $|S| \leq f(t)$ such that $V(D) \backslash S$ is
acyclic. Since $\chi(D) \geq 3$, we have that $\chi(D[S]) \geq 2$,
which implies that $S$ contains a directed cycle of length at most
$|S| \leq f(t) \leq \tfrac{n}{t} = \tfrac{n}{h(n)}=o(n)$.
\end{proof}

\end{document}